\newtheorem{theorem}{Theorem}[section]
\theoremstyle{plain}
\newtheorem{definition}{Definition}
\newtheorem{example}{Example}
\newtheorem{lemma}{Lemma}[section]
\newtheorem{remark}{Remark}
\numberwithin{equation}{section}
\begin{document}
	\title[AL-monoids]{Autometrized Lattice Ordered monoids
	}
	%\title[Metric ideals in autometrized lattice ordered monoids]{Metric ideals in AL-monoids}
	\author[T. R. Ashale]{Tekalign Regasa Ashale} 
	\address{{\bf Tekalign Regasa Ashale}
		\\Department of Mathematics, Addis Ababa University, Addis Ababa, Ethiopia}
	\email{tekalign.regasa@aau.edu.et}

	\author[G. A. Abebe]{Girum AKlilu Abebe} 
	\address{{\bf Girum AKlilu Abebe}
		\\Department of Mathematics, Addis Ababa University, Addis Ababa, Ethiopia}
	\email{girum.aklilu@aau.edu.et}
	
	\author[Kolluru Venkateswarlu ]{Kolluru Venkateswarlu } 
	\address{{\bf Kolluru Venkateswarlu }
		\\Department of Computer Science and Systems Engineering, Andhra University, Visakhaptanam, India}
	\email{drkvenkateswarlu@gmail.com}

	\subjclass[2020]{06F05, 06B10, 20M14, 20M12}
	
	%\thanks{$\dagger$ This work was supported by KKMS.}
	%\thanks{$*$ Corresponding author.}

	%\subjclass{20M14, 20M32, 06A06, 06B05, 06B15}
	%\keywords{Keyword 1, Keyword 2, Keyword 3}
	%\thanks{$\dagger$ This work was supported by KKMS.}
	%	\thanks{$*$ Corresponding author.}
	
	%%%%%%%%%%%%%%%%%%%%%%%%%%%%%%%%%%%%%%%
	\setcounter{page}{1}
	
	\begin{abstract}
		In this paper, we introduce autometrized lattice ordered monoids (AL-monoids), a generalization of DRl-semigroups, and establish several algebraic properties of AL-monoids. We also investigate the isometries of AL-monoids, proving that the set of invertible elements of an AL-monoid forms a l-group. Finally, we show that an algebra $A$ with identity is a Boolean algebra if, for each $a \in A$, the mapping $x \mapsto a \ast x$ is an isometry. Throughout the chapter We use $A$ as AL-monoids and $a,b,c,d, x,y$ as elements of AL-monoids.
	\end{abstract}
	\keywords{DRl-semigroup, Autometrized algebra, Lattice ordered autometrized algebra, Representable autometrized algebra, AL-monod, Isometry}.
	%%%%%%%%%%%%%%%%%%%%%%%%%%%%%%%%%%%%%%%
	\maketitle

	%%%%%%%%%%%%%%%%%%%%%%%%%%%%%%%%%%%%%%%
	\section*{Introduction}
	In \cite{swamy1965dually}, Swamy initiated the study of DRl- semi groups (dually residuated lattice ordered semi groups) as an answer  to the question $''$ is there a common abstraction that includes Boolean algebras (rings) and l-groups as special cases$''$ posed by Birkhoff in his book \cite{birkhoff1973lattice}. However, there were several other  solutions for the common abstraction  of Birkhoff's problem. Clans by Wyler \cite{wyler1966clans}, multi-rings by Nakano \cite{nakano1967rings},  common abstraction by Rama Rao \cite{rao1969common,rao1972common}(he did not mention any name for the system, since it turned out to be the direct product of Boolean ring and l-group) are a few. In \cite{sw}, Swamy introduced the concept of autometrized algebras as a unified theory of Browerian algebras \cite{Nord}, commutative l-groups, Boolean -l-algebras \cite{Rao}. All these algebras (which are lattices)possess a metric namely symmetric difference. However there are some other algebras namely semi- brouwerian algebra \cite{Murty}, Newmann algebra \cite{Newm}  do possess a metric but they are not even lattices. 
	
	Subba Rao \cite{BV d} introduced and studied a special subclass namely representable autometrized algebras. He made a through study on the geometric and algebraic aspects of representable autometrized algebras. In 2023, M Melese \cite{tilahun2023structure;Structure of an autometrized algebra} study structure of autometrized algebra by introducing subalgebra and examines ideals, homomorphisms, quotient algebra, and isomorphism theorem within autometrized algebra.
	
	Subba Rao in \cite{BV d} introduced the notion of representable autometrized algebras fairly a wider class to the class of DRl-semigroups and extensively studied algebraic and geometric aspects of these classes. Further he made a study on lattice ordered autometrized algebras which is semiregular autometrized algebra, and all the mappings are contractions concerning semiregular operations, and showed that it possesses the geometry shared by Boolean algebras and lattice ordered groups, DRl- semigroups, etc. While looking at the generalization, some of the properties may not hold.
	For instance any representable autometrized algebra is not equationally definable and also not distributive.
	
	To fill this kind of gap, we  introduce the concept of Autometrized Lattice Ordered Monoid (AL-monoid), which is a generalization of a  dually residuated lattice-ordered semigroup (DRl-semigroup)and a subclass of representable autometrized algebra. The class of AL-monoids enjoy the properties of a variety. Furthermore, we investigate the algebraic properties and isometries in AL-monoids.
	
	In \textbf{section 2}, we recall the definitions of a DRl-semi group  and an Autometrized algebra from \cite{rao2019note: a note on representable autometrized algebra, subba2018metric} and \cite{ swamy1965dually, swamy1965duallyb}.
	
	In \textbf{section 3}, we introduce the  notion of an  AL-monoid and furnish examples. Further we give an example of an AL-monoid which is not a DRl-semi group establishing that AL-monoid is a generalization of DRl-semi group. We establish certain algebraic consequences of of AL-monoid. Also, we show that the set of all complemented elements form a Boolean algebra and the set of all invertible elements form a l-group. we conclude the section by showing, every AL-monoid with 1 form a Boolean algebra if the map $x\mapsto a\ast x$ is an isometry. 
	
	\section{Preliminaries}
	\begin{definition}\cite{swamy1965dually}
		An Autometrized algebra A is a system $(A,+,\leq,\ast )$ where 
		\begin{itemize} 
			\item[{(a)}] $(A,+)$ is a binary commutative algebra with element 0.
			\item[{(b)}] $\leq$ is antisymmetric, reflexive ordering on A.
			\item[{(c)}] $\ast : A\times A\rightarrow A$ is a mapping satisfying the formal properties of distance, namely,
			\begin{itemize}
				\item[{(1)}] $a\ast b\geq 0$ for all $a,b$ in A,equality,if and only if $a=b$.
				\item[{(2)}]$a\ast b=b\ast a$ for all $a,b$ in A, and
				\item[{(3)}] $a\ast b\leq a\ast c +c\ast b$ for all $a,b,c$ in A.
			\end{itemize}
		\end{itemize}
	\end{definition}
	\begin{definition}
		\cite{swamy1965duallyb} A system $A=(A,+,\leq,\ast )$ of arity $(2,2,2)$ is called a lattice ordered autometrized algebra, if and only if, A satisfies the following conditions.
		\begin{itemize}
			\item[{(1).}] $(A,+,\leq)$ is a commutative lattice ordered semi-group with $'0'$, and
			\item[{(2).}] $\ast$ is a metric operation on A.i.e, $\ast$ is a mapping from $A\times A$ into A satisfying the formal properties of distance, namely,
			\begin{itemize}
				\item[{(a)}] $a\ast b\geq 0$ for all $a,b$ in A,equality,if and only if $a=b$,
				\item[{(b)}]$a\ast b=b\ast a$ for all $a,b$ in A, and
				\item[{(c)}] $a\ast b\leq a\ast c +c\ast b$ for all $a,b,c$ in A.
			\end{itemize}
		\end{itemize}
	\end{definition}
	\begin{definition}
		\cite{rao2019note: a note on representable autometrized algebra} A lattice ordered autometrized algebra $A=(A,+,\leq,\ast )$of arity $(2,2,2)$ is called representable autometrized algebra, if and only if, A satisfies the following conditions:
		\begin{itemize}
			\item[{(1)}] $A=(A,+,\leq,\ast)$ is semiregular autometrized algebra. Which means $a\in A$ and $a\geq 0$ implies $a\ast 0=a,$ and 
			\item[{(2)}]  For every a in A, all the mappings $x\mapsto a+x,x\mapsto a\vee x,x\mapsto a\wedge x$ and $x\mapsto a\ast x$ are contractions (i.e.,if $\theta$ denotes any one of the operations $+,\wedge,\vee$and $\ast$,then,for each a in A,$(a\theta x)\ast (a\theta y)\leq x\ast y$ for all $x,y$ in A)
		\end{itemize}
	\end{definition}
	\begin{definition}\cite{sw}
		An autometrized algebra $(A,+,\leq, \ast )$ is called normal autometrized algebra if it satisfies:
		\begin{enumerate}
			\item $a\leq a\ast 0$ for all $a$ in $A.$
			\item $(a+c)\ast (b+d)\leq (a\ast b)+(c\ast d)$ for all $a, b, c, d$ in $A.$
			\item $(a\ast c)\ast (b\ast d)\leq (a\ast b)\ast (c\ast d),$ for all $a,b,c,d \in A.$ 
			\item For all $a,b$ in $A,$ if $a\leq b,$ then there exists $x\geq 0$ such that $a+x=b.$
		\end{enumerate}
	\end{definition}
	\begin{definition}\cite{Normal}
		An algebra $(A, +,\wedge, \vee, \ast, 0)$ is a normal autometrized lattice ordered algebra (NAl-algebra) if the following holds.
		\begin{itemize}
			\item[(i).] $(A,+,\leq)$ is an abelian lattice ordered monoid
			\item[(ii).] $\ast$ is metric operation.
			\item[(iii).] $x \ast 0\geq x$ for each $x\in A.$
			\item[(iv).] $(x+y)\ast (x'+y')\leq (x\ast x' +y\ast y')$ for all $x,y,x',y' \in A.$
			\item[(v).] $(x\ast y)\ast (x'\ast y') x,y,x',y' \in A$
			\item[(vi).] $x,y\in A$ and $x\leq y$ imply there exist $z\in A$ such that $x+z=y.$
		\end{itemize}
	\end{definition}
	\begin{definition}\cite{swamy1965dually} \label{d}
		Dually residuated lattice ordered semigroup (DRl-semigroup) is an algebra $A=(A,+,\leq,-,0)$ of type $(2,2,2,0)$ satisfying
		\begin{itemize}
			\item[{(1)}] $(A,+,\leq,0)$ is a commutative lattice ordered semigroup with identity element 0.
			\item[{(2)} ]for every element $a,b\in A$ there is a least element $x$ such that $x+b\geq a$ for which x is uniquely determined as $a-b.$
			\item[{(3)}] $(a-b)\vee 0+b\leq a\vee b.$
			\item[{(4)}] $a-a\geq 0.$
		\end{itemize}
	\end{definition}
	
	We begin with the following definition:
	\begin{definition}\label{2.1}
		An autometrized lattice ordered monoid (AL-monoid) is an algebra $(A,+,\vee,\wedge,\ast,0)$ of arity $(2,2,2,2,0)$ where:
		\begin{enumerate}
			\item $(A,+,\vee,\wedge,0)$ is a commutative lattice ordered monoid with least element $0$.
			\item $a \ast (a \wedge b) + b = a \vee b$ for all $a, b\in A.$
			\item The mappings $x \mapsto a+x$, $a \vee x$, $a \wedge x$, and $a \ast x$ are contractions with respect to $\ast$ (i.e., a mapping $f: A \to A$ is a contraction with respect to $\ast$ if $f(x) \ast f(y) \leq x \ast y$, where $\leq$ is the ordering in $A$ induced by $(A,\vee,\wedge)$).
			\item $[a \ast (a \vee b)] \wedge [b \ast (a \vee b)] = 0.$
		\end{enumerate}
	\end{definition}
	\begin{remark}\label{rem1}
		\begin{enumerate}
			\item The conditions $1,2$ and $3$ yield a common abstraction of commutative l-groups and Brouwerian algebras.
			\item All conditions in the above definition yield a common abstraction of commutative l-groups and Boolean algebras.
		\end{enumerate}
	\end{remark}

	\begin{example} Every Boolean algebra $(B,\vee,\wedge,^{\prime},0;,1)$ is an AL-monoid with $a \ast b = (a \wedge b^{\prime}) \vee (a^{\prime} \wedge b)$.
	\end{example}
	\begin{example} Every commutative l-group $(G,\vee,\wedge,+,-)$, with $- $ which is the operation inverse of $+$, is an AL-monoid if we define $a \ast b = (a + (-b)) \vee (b + (-a))$, where $-a$ and $-b$ represent the additive inverses of $a$ and $b$ respectively.
	\end{example}
	\begin{remark} Every DRl-semigroup is an AL-monoid since it is the common abstraction of Brouwerian algebras and commutative l-groups. It satisfies remark \ref{rem1}.; however, the converse does not hold. The following Example \ref{ex1}  illustrates an AL-monoid that is not a DRl-semigroup.
	\end{remark}
	\begin{example} \label{ex1}Let $A = \mathbb{Z^{+}} \cup \{u\}$, where $\mathbb{Z^{+}}$ is the set of all integers and $u$ is an element not in $\mathbb{Z}$. For all $a, b \in \mathbb{Z}$, define the operations as follows: 
		- $a + b$ is the usual sum, 
		- $a + u = u = u + a$, 
		- $u + u = u$, 
		- $a \ast b = |a - b|$, 
		- $a \ast u = u = u \ast a$. 
		$a\leq u, u\wedge u=u$ and $u\vee u=u, u\ast u=0.$
		Define $\leq$ in $A$ for all $a \in \mathbb{Z}$ as the usual ordering. Then $A = (A, \leq, +, \ast)$ is an AL-monoid that is not a DRl-semigroup since there is no unique least element in $A$ such that $x + u \geq a$ for any $a$ in $\mathbb{Z}$.
	\end{example}
	\begin{example} Normal autometrized lattice ordered algebras are AL-monoids. But the converse is not true. See Example \ref{ex1}.\end{example}
	\begin{remark}The following examples demonstrate that conditions (2) and (4) of Definition \ref{2.1} are independent in any AL-monoid.
	\end{remark}
	\begin{example}\label{ex2.9} Let $A$ be the lattice of all closed subsets of the real numbers with the usual topology. It can be verified that $A$ satisfies for all $a, b\in A; a \ast (a \wedge b) + (a \wedge b) = a$, but does not satisfy condition (4). For instance. Consider $x = [0,2]$ and $y = [2,3]$. Then $(x \ast (x \vee y)) \wedge (y \ast (x \vee y)) = \{2\} \neq 0.$ 
	\end{example}
	\begin{remark}
		From Example \ref{ex2.9}, it can be observed that the class of representable autometrized algebras is broader than the class of AL-monoids.
	\end{remark}
	\begin{example}	
		Let $A = \mathbb{Z} \vee \{u,v\}$, where $\mathbb{Z}$ is the set of all integers, and $u$ and $v$ are elements not in $\mathbb{Z}$. Define:
		$a + b$ as the usual sum,
		$a + u = u = u + a$,
		$a + v = v = v + a$ for all $a \in \mathbb{Z}$,
		$u + v = u = v + u$, 
		$u + u = u$, 
		$v + v = v$.
		Define $\leq$ in $A$ as follows: for elements in $\mathbb{Z}$, let $\leq$ be the usual ordering; define $u < a < v$ for all $a \in \mathbb{Z}$. 
		Define $\ast$ in $A$ as follows: 
		$a \ast b = |a - b|$ for all $a, b \in \mathbb{Z}$,
		$a \ast u = v = u \ast a$, 
		$u \ast v = v = v \ast u$, 
		$u \ast y = 0 = v \ast v$. 
		It can be easily verified that $(A,+,\leq,\ast)$ satisfies axioms (1), (3), and (4) of Definition \ref{2.1}, but does not satisfy axiom (2) because $v \ast (v \wedge u) + (v \wedge u) = v \ast u + (v \wedge u) = v + u \neq v.$
	\end{example}
	\begin{remark}
		Axiom (ii) of Definition \ref{2.1} shows AL-monoid is  equationally definable, ensure that the class of AL-monoids is closed under the formation of subalgebras, direct unions, and homomorphic images, thus forming a variety.
	\end{remark}
	\section{Algebraic properties}\label{s21}
	\begin{lemma}
		$a=a\ast (a\wedge b)+(a\wedge b) $ for all $a, b$ in $A$.
		\begin{proof}
			$a=a\vee(a\wedge b)~~\text{...absorption law}=a\ast (a\wedge (a\wedge b)) +(a\wedge b)...\text{from (2)~~of definition \ref{2.1}}=a\ast (a\wedge b)) +(a\wedge b)$
		\end{proof}
	\end{lemma}
	
	\begin{lemma}\label{L2.1}
		$a \ast (a \wedge b) = (a \vee b) \ast b$ for all $a, b\in A.$
		\begin{proof}
			Let $a, b \in A$. Then, $a \ast (a \wedge b) = ((a \vee b) \wedge a) \ast (a \wedge b)~~\text{....absoption law}=((a\vee b)\wedge a)\ast ((a\wedge b)\wedge a)) \leq (a \vee b) \ast (a \wedge b).\text{~~by (iii) of Definition \ref{2.1}}. $ and, $ b \ast (a \vee b) = ((a \wedge b) \vee b) \ast (a \vee b) = (b \vee (a \wedge b)) \ast (b \vee a) \leq a \ast (a \wedge b) \text{ (by axiom (3) of Definition \ref{2.1})}.$
		\end{proof}
	\end{lemma}
	\begin{lemma}\label{L2.2}
		Let $A$ be an AL-monoid and $a,b \in A.$ Then $ a\ast 0=0.$ (regularity). It follows that $\ast$ is semiregular.
	\end{lemma}
	\begin{proof}
		We have $a = a \ast (a \wedge 0) + (a \wedge 0) = a \ast 0$. In particular, $0 \ast 0 = 0$.
	\end{proof}
	\begin{lemma}Let $A$ be an AL-monoid and $a\in A.$ Then \( a \ast a = 0 \)\end{lemma} \label{lem1} \begin{proof}
		$a\vee a=a\ast (a\wedge a)+a \implies a=a\ast a+a \implies a\ast a=0.$
	\end{proof}		
	\begin{lemma} Let $A$ be an AL-monoid and $a,b \in A.$ Then \( a \ast b \geq 0 \).\end{lemma} \begin{proof}Consider the following:
		$0 = 0 \ast 0 = [0 \ast (0 \wedge (a \wedge b)) + (0 \wedge (a \wedge b))] \ast [0 \ast (0 \wedge (a \wedge b)) + (0 \wedge (a \wedge b))]\leq (0 \wedge (a \wedge b)) \ast (0 \wedge (a \wedge b))= 0 \wedge (a \wedge b \wedge a) \ast (0 \wedge a \wedge b \wedge b)\leq (a \wedge b) \ast (a \wedge b) \leq a \ast b.$ Thus, we conclude that \( a \ast b \geq 0 \).	Since \( a \ast b \geq 0 \), we have \( a \ast a = 0 \).
	\end{proof}
	\begin{lemma}Let $A$ be an AL-monoid and $a,b \in A.$ Then \( a \ast b = b \ast a \).\end{lemma}
	\begin{proof} $a \ast b = (a \ast (a \wedge b) + (a \wedge b)) \ast (b \ast (a \wedge b) + (a \wedge b)) \leq (a \ast (a \wedge b)) \ast (b \ast (a \wedge b))\leq (a \wedge b) \ast b \ast (a \wedge b) \leq b \ast a.$ Interchanging \( a \) and \( b \) gives \( b \ast a \leq a \ast b \).\end{proof}
	\begin{lemma} Let $A$ be an AL-monoids and $a,b \in A.$ Then $a\ast b=0 \implies a=b.$ \end{lemma}
	\begin{proof}
		To show \( a \ast b = 0 \) implies \( a = b \): $a = a \ast (a \wedge b) + (a \wedge b) = ((a \wedge a) \ast (a \wedge b)) + (a \wedge b) \leq (a \ast b) + (a \wedge b) \leq (a \ast b) + b = 0 + b = b.$
		Thus, \( a \ast b = 0 \) implies \( b \ast a = 0 \) and hence \( b \leq a \). Therefore, \( a = b \). 
	\end{proof}
	\begin{lemma}: $a\ast c\leq a\ast b+a\ast c;$ for
		all $a,b,c$ in $A.$ \end{lemma}
	\begin{proof}	$a \ast b = (a \ast b) \ast ((a \ast b) \wedge (c \ast b)) + (a \ast b) \wedge (c \ast b) \leq (a \ast b) \ast (c \ast b) + (c \ast b) \leq (a \ast c) + (c \ast b).$
	\end{proof}
	\begin{theorem}
		An AL-monoid \( A \) is an autometrized algebra.
		\begin{proof}
			Follows from (i) of Definition \ref{2.1} and lemma 
			Thus, \( A \) is an autometrized algebra.
		\end{proof}
	\end{theorem}
	\begin{theorem}
		Every AL-monoids are representable autometrized algebra.\end{theorem}
	\begin{proof}
		This follows from Lemma \ref{L2.2} and condition (3) of Definition \ref{2.1}.
	\end{proof}
	\begin{example}
		A Brouwerian algebra is representable autometrized algebra. But not AL-monoids. Since there is no least element in AL-monoids $A$ such that $a+x\geq a.$
	\end{example}
	\begin{theorem}\label{T2:3}
		Let $A$ be any AL-monoids and $a, b, c \in A$. The following conditions hold:
		\begin{itemize}
			\item[(i)] $b \leq a \Rightarrow a = a \ast b + b$,
			\item[(ii)] $a \vee b = a \ast b + a \wedge b$,
			\item[(iii)] $a \ast b = (a \vee b) \ast (a \wedge b)$,
			\item[(iv)] $a \ast b = (a \ast (a \wedge b)) + (a \wedge b) \ast b = a \ast (a \vee b) + (a \vee b) \ast b$, for all $a, b \in A$.
		\end{itemize}
	\end{theorem}
	
	\begin{proof}
		$a=a\vee(a\wedge b)=a\ast (a\wedge (a\wedge b))+(a\wedge b)\text{~~~~from (ii) of defn \ref{2.1}}= a\ast (a\wedge b)+(a\wedge b). $ Thus	(i).	$b \leq a \implies b=a\wedge b$. Then putting into equation equation(3.2)  we have $a = a \ast b + b.$
		
		For (ii), we used (iii) of Definition \ref{2.1} consider:
		
		$a\leq a\vee b;~~~ b\leq (a\vee b)...\text{(A, $\leq$ )~~is a lattice).}$
		$a\vee b\leq (a\vee b)\ast b \leq a\ast b+ b~~~\text{by (i). } \& (a\vee b)\ast a \leq b\ast a=a\ast b+a.$ This implies $a\vee b\leq (a\ast b)\wedge (a\ast b+a)=a\ast b+a\wedge b.$ For the reverse, we have: $(a \ast b) + (a \wedge b) = (a \ast (a \wedge b) + (a \wedge b)) \ast (b \ast (b \wedge a) + (b \wedge a)) + a \wedge b\leq ((a \ast (a \wedge b)) \ast (b \ast (b \wedge a)) + (a \wedge b))\text{(iii) of Definition \ref{2.1}} = ((a  \ast (a\wedge b)+b=a\vee b.$ Thus, $a \vee b = a \ast b + a \wedge b$.
		
		To prove (iii), let: $ a \ast b = (a \ast (a \wedge b) + a \wedge b) \ast (b \ast (a \wedge b) + (a \wedge b)) \leq (a \ast (a \wedge b)) \ast (b \ast (a \wedge b)) \text{ (by axiom (3) of Definition \ref{2.1})} \leq  (b \ast (a \vee b)) \ast (b \ast (a \wedge b)) \text{ (by Lemma \ref{L2.1})} = (a \vee b) \ast (a \wedge b). $
		
		Finally, for (iv), let $a, b \in A$. By the triangle inequality, we have:
		$ a \ast b \leq a \ast (a \wedge b) + (a \wedge b) \ast b \leq (a \vee b) \ast b + b \ast (a \wedge b) \text{ (by Lemma \ref{L2.1})} = (a \ast b + a \wedge b) \ast (b \ast (a \wedge b) + (a \wedge b)) + b \ast (a \wedge b) \text{ (by (2) of this theorem)} \leq (a \ast b) \ast (b \ast (a \wedge b)) + (b \ast (a \wedge b)) \text{ (by axiom (3) of Definition \ref{2.1})}\leq ((a \ast b) \ast 0) =a \ast b \text{ (by (i) of this theorem, since $b \ast (a \wedge b) \leq b \ast a = a \ast b$)}.$
		
		Now we have $a \ast b = a \ast (a \wedge b) + (a \wedge b) \ast b = (a \vee b) \ast b + (a \vee b) \ast a \text{ (by Lemma \ref{L2.1})} = a \ast (a \vee b) + (a \vee b) \ast b.$
	\end{proof}
	\begin{remark}
		Not all Lattice ordered autometrized algebra is representable.
	\end{remark}
	\begin{example}
		Not all lattice ordered autometrized algebras are AL-monoids. Similarly Brouwerian algebra are lattice ordered autometrized algebra. But, not AL-monoids.
	\end{example}
	\begin{remark}
		Not all Normal autometrized algebras are AL-monoids.
	\end{remark}
	\section{Isometries}\label{s22}
	
	\begin{theorem}
		Let $a, b \in A$. Then the following statements hold:
		\begin{enumerate}
			\item $a \wedge 0$ is invertible.
			\item If $a$ is invertible, then $a \wedge b$ is invertible.
			\item If $a$ is invertible, then $a \vee 0$ is invertible.
			\item If $a$ and $b$ are invertible, then $a \wedge b$ is invertible.
		\end{enumerate}
		\begin{proof}
			1. To show that $a \wedge 0$ is invertible, note that:$(0 \ast (0 \wedge a) + (0 \wedge a))=0\ast (0\wedge (a\wedge 0))+(a\wedge 0)=0\vee (0\wedge a) = 0.$Thus, $a \wedge 0$ is invertible.
			
			2. If $a$ is invertible, then $a + x = 0$ implies:
			$a \ast (a \wedge (a \wedge b)) + (a \wedge (a \wedge b)) + x =0 \Rightarrow (a \ast (a \wedge (a \wedge b)) + x) + (a \wedge (a \wedge b)) = 0.$
			This shows that $a \wedge b$ is invertible.
			
			3. Since both $a$ and $a \wedge 0$ are invertible, it follows that $a \vee 0$ is invertible as $a = a \vee 0 + a \wedge 0$.
			
			4. Finally, for invertible $a$ and $b$, we have:
			$a \vee b = a \ast (a \wedge b) + b  \implies a =a \ast (a \wedge (a \wedge b) + (a \wedge b)= a \vee (a \wedge b) = a.$	As $a$ and $a \wedge b$ are invertible, it follows that $a \ast (a \wedge b)$ is also invertible, proving (4).
		\end{proof}
	\end{theorem}
	
	%\begin{lemma}\label{L2:8}
	%	If $y + x = 0$, then $y = x$.
	%\end{lemma}
	%\begin{proof}
	%	From the equation $y = y \ast 0 = y \ast (y + x)$, we have:
	%	$$y = (y + 0) \ast (y + x) \leq 0 \ast x \text{ (by the contraction mapping of $\ast$)}.
	%	$$
	%	Thus, \begin{eqnarray*}y &=& 0 \ast y \\&\leq& x \ast 0 \\&=& x \ast (y + x)\\& = &(x + 0) \ast (y + x) \leq 0 \ast y.\end{eqnarray*}
	%	This implies $x = 0 \ast y$.
	%\end{proof}
	
	%\begin{lemma}\label{l6}
	%	$(a \vee b) \ast c = (a \ast c) \vee (b \ast c)$.
	%	\begin{proof}
		%	Since $a \leq (a \vee b)$ and $b \leq (a \vee b)$, we have $a \ast c \leq (a \vee b) \ast c$ and $b \ast c \leq (a \vee b) \ast c$ (by Lemma \ref{l1}). Therefore, $(a \ast c) \vee (b \ast c) \leq (a \vee b) \ast c$.
		
		%		Using Lemma \ref{l1}, it follows that $(a \vee b) \ast c \geq (a \ast c) \vee (b \ast c)$. Now,
		%		\begin{eqnarray*}
			%		(a \ast c) \vee (b \ast c) + c &=& ((a \ast c) + c) \vee ((b \ast c) + c) \\&\geq& a \vee b.
			%\end{eqnarray*}
			%	Thus, $(a \vee b) \ast c \leq (a \ast c) \vee (b \ast c)$. Hence, the lemma holds.
			%	\end{proof}
		%\end{lemma}
		
		%\begin{lemma}\label{L:9}
		%	$a \ast (b \wedge c) \leq (a \ast b) \vee (a \ast c)$.
		%	\begin{proof}
			%		Since $b \wedge c \leq c$, it follows that $a \ast (b \wedge c) \leq a \ast c$. Thus, $a \ast (b \wedge c) \leq (a \ast b) \vee (a \ast c)$.
			%	\end{proof}
		%\end{lemma}
		\begin{lemma}
			The inverse in AL-monoid is unique.
		\end{lemma}
		\begin{proof}
			Let \( a, b, c \in A \) be elements of the AL-monoid. Suppose \( b \) and \( c \) are both inverses of \( a \). This means  $ b + a = 0 $ and $c + a = 0$. From the first equation, we have $b = -a.$ From the second equation, it follows that $c = -a$ Since both \( b \) and \( c \) are equal to \( -a \), we conclude that $b = c.$Thus, the inverse of \( a \) is unique. \end{proof}
		\begin{theorem}\label{T17}
			The set of all invertible elements of an AL-monoid $A$ forms a l-group.
		\end{theorem}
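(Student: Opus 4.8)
The plan is to let $G$ denote the set of all invertible elements of $A$ and to verify the three defining features of an l-group: that $(G,+,0)$ is an abelian group, that $G$ is closed under $\vee$ and $\wedge$ (so that $(G,\vee,\wedge)$ is a sublattice of $A$), and that the order inherited from $A$ is translation invariant. Once these are in place, the commutative lattice ordered monoid structure of $A$ restricts to exhibit $G$ as a commutative l-group.

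First I would dispose of the group structure, which is routine. The element $0$ is its own inverse, so $0\in G$; if $a,b\in G$ with inverses $-a,-b$, then commutativity gives $(a+b)+((-a)+(-b))=0$, so $a+b\in G$; and if $a\in G$ then its inverse $-a$ is itself invertible, witnessed by $a$. Associativity, commutativity and the identity law are inherited from $(A,+,0)$, so $(G,+,0)$ is an abelian group.

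Second, and this is the crux, I would show $G$ is a sublattice of $A$. Closure under $\wedge$ is exactly the preceding theorem on invertible elements (if $a,b$ are invertible then so is $a\wedge b$). For closure under $\vee$ I would argue as follows. Given $a,b\in G$, we have $a\wedge b\in G$ and $a\wedge b\leq a$; applying Theorem \ref{T2:3}(1) to the pair $a\wedge b\leq a$ yields $a=a\ast(a\wedge b)+(a\wedge b)$, whence $a\ast(a\wedge b)=a+(-(a\wedge b))$ is a sum of invertible elements and therefore lies in $G$. Axiom (2) of Definition \ref{2.1} then gives $a\vee b=a\ast(a\wedge b)+b\in G$ (and, as a byproduct, the familiar identity $a\vee b=a+b-(a\wedge b)$). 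Since these meets and joins are computed in $A$ and fall inside $G$, they remain the meet and join within the subposet $G$, so $G$ is a sublattice of $A$.

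Finally I would note that the order on $G$ is simply the restriction of the order on $A$, which is translation invariant because $A$ is a lattice ordered monoid: $x\leq y$ implies $x+z\leq y+z$ for all $z$. Restricting to $z\in G$ gives compatibility of $+$ with $\leq$ on $G$, and this, together with the group and lattice structures established above, is precisely the assertion that $G$ is an l-group. I expect the only genuine work to be the closure of $G$ under $\vee$; the group axioms and the inheritance of order compatibility are routine, and closure under $\wedge$ is already available from the earlier theorem.
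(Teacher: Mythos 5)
Your proposal is correct, but it takes a genuinely different route from the paper's. The paper's proof works from Lemma \ref{L2:8}: it identifies the inverse of an invertible $x$ explicitly as $0\ast x$, derives identities relating $\ast$ to $+$ on invertible elements, and then establishes closure under $\wedge$ by the direct distributivity computation $x\wedge y+\bigl((0\ast x)\vee(0\ast y)\bigr)=\bigl(0\wedge(y\ast x)\bigr)\vee\bigl((x\ast y)\wedge 0\bigr)=0$, i.e.\ it exhibits $(0\ast x)\vee(0\ast y)$ as the inverse of $x\wedge y$ from scratch. You instead import $\wedge$-closure from the earlier theorem on invertible elements (item (4)) and then obtain $\vee$-closure essentially for free: since $a\wedge b$ is invertible and $a=a\ast(a\wedge b)+(a\wedge b)$, you get $a\ast(a\wedge b)=a-(a\wedge b)\in G$, whence $a\vee b=a\ast(a\wedge b)+b\in G$ by axiom (2). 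Your route is more modular and is actually more complete as a verification of the l-group axioms — the paper never explicitly checks closure under $\vee$, the sublattice point, or order compatibility, all of which you do — while the paper's route buys the concrete formulas $-x=0\ast x$ and $-(x\wedge y)=(0\ast x)\vee(0\ast y)$, which are useful later. One caveat: your argument delegates the only genuinely nontrivial closure ($\wedge$) to the preceding theorem, whose proof in the paper is itself rather sketchy (it appears to assume invertibility of $a\wedge b$ in the course of proving it); if you want your proof to be self-contained you should re-establish that step, for instance by the paper's distributivity computation above.
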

		\begin{proof}
			If $a$ and $b$ are invertible and $a\leq b,$ then $a+(-b)\leq 0.$ Which implies that $-a+(-b)\leq -a.$ Thus, $-b\leq -a$ and hence the $-(a\vee b)=-a\wedge -b-(a\wedge b)=-a\vee -b.$ Therefore, by this relation the class becomes l-group.
		\end{proof}
		%	Let $A^*$ be the set of all invertible elements of $A$. For $a, b \in A^*$, we have:
		%	\begin{eqnarray*}
			%	(a + b) + (a^{-1} + b^{-1}) &=& a + (b + b^{-1}) + a^{-1} \\&=& a + 0 + a^{-1} = 0.
			%	\end{eqnarray*}
		%	Thus, $a + b$ is invertible. The identity element $0$ is invertible since $0 + 0 = 0$.
		
		%	For $a \in A^*$, the inverse $a^{-1} \in A^*$ exists by definition. We define $a \leq b$ if $a + b^{-1}$ is invertible. This order is compatible with the monoid operation.
		
		%	For $a, b \in A^*$, define:
		%	\[
		%	a \vee b = a + (b + a^{-1}), \quad a \wedge b = a^{-1} + (a + b).
		%	\]
		%	Clearly, the set $A^*$ forms a l-group.
		%	\end{proof}
	
	\textbf{Notation:} We write $-b$ for the inverse of an invertible element $b$ and $a - b$ for $a + (-b)$.
	\begin{lemma}
		If $a, b$ are invertible, then so is $a\ast b.$ 
	\end{lemma}
	\begin{proof}
		$a\vee b=a\ast(a\wedge b)+b))\implies a\ast b+b=a\vee (a\wedge b)=a.$ 	\end{proof} Hence $a\ast b$ is invertible.
	\begin{theorem}
		If $a, b$ are invertible elements of $A$, then:
		\begin{enumerate}
			\item[(i)] $a \ast (a \wedge b) = (a - b) \vee 0$;
			\item[(ii)] $a \ast b = (a \vee b) - (a \wedge b)$;
			\item[(iii)] $a \ast 0 \geq a \vee 0 \geq a$; and
			\item[(iv)] If $x$ is invertible and $a + x = b + x$, then $a = b$.
		\end{enumerate}
		\begin{proof}
			(i). We have $a \ast (a\wedge b), a \wedge b$ as elements of a l-group. Now,
			$	a \ast (a \wedge b) + a \wedge b =  a \implies a \ast (a \wedge b) = a - (a \wedge b) = a + (-a \vee -b)= (a + (-a)) \vee (a + (-b)) = (a - a) \vee (a - b) = 0 \vee (a - b) = (a - b) \vee 0.$
			Hence (i) holds.
			
			(ii). Since $x \geq y$ and $x, y$ are invertible, it implies $x - y \geq 0$. Therefore,
			$x \ast (x \wedge y) = (x - y) \vee 0 = x - y.$
			Thus,
			$a \ast b = (a \vee b) \ast (a \wedge b) = (a \vee b) - (a \wedge b).$
			Hence (ii) holds.
			
			(iii). By (4) of Definition \ref{2.1}, we have:
			$a \vee 0 = a \ast (a \wedge 0) = (a \wedge a) \ast (a \wedge 0) \leq a \ast 0.$
			Thus, (iii) holds.
			
			(iv). If $x$ is invertible and $a + x = b + x$, then:
			$(a + x) - x =(b + x) - x \implies a + 0 = b + 0 \implies  a = b.$
			Hence (iv) holds.
		\end{proof}
	\end{theorem}
	
	\begin{definition}
		An element $a \in A$ is idempotent if $a + a = a$.
	\end{definition}
	
	\begin{definition}
		A bijective mapping $\sigma: A \to A$ is an isometry if $\sigma(x) \ast \sigma(y) = x \ast y$ for all $x, y \in A$.
	\end{definition}
	
	\begin{theorem}
		The mapping $x \mapsto a + x$ is an isometry for all $a$ and $x$ in $A$  if $a$ is invertible.
		\begin{proof}
			If $a$ is invertible, then $a + x = a + y$ implies $x = y$, showing that $x \mapsto a + x$ is injective.
			
			For $y \in A$, let $x = y - a$ so that $x + a = y$. Thus, it is a bijection. 
			
			Now, we check if it is an isometry:$(a \ast x) \ast (a \ast y) \leq x \ast y = (-a + a + x) \ast (-a + a + y) \leq (a + x) \ast (a + y) = x \ast y.$
			Hence, the map is an isometry.
			
			Conversely, if the map is an isometry, then $0 = a + x$ for some $x$ implies $a$ is invertible.
		\end{proof}
	\end{theorem}
	
	\begin{theorem}
		If $a, b$ are idempotents of an AL-monoid $A$, then:
		\begin{itemize}
			\item[(i)] $a \geq 0$.
			\item[(ii)] $a \wedge b$ is also idempotent.
			\item[(iii)] $a \vee b = a + b$.
			\item[(iv)] Both $a \vee b$ and $a + b$ are idempotents.
			\item[(v)] $a + b = a \vee b + a \wedge b$ is idempotent.
		\end{itemize}
	\end{theorem}
	
	\begin{proof}
		1. If $a + a = 0$, then:$ a + a \vee 0 + a \wedge 0 = a \vee 0 + a \wedge 0 \text{~~ 0 in lattice represent a least element}.$
		This implies $ a + a \vee 0 = (a + a) \vee (a + 0) = a \vee 0.$ Thus, $a \vee a = a \vee 0 $ 
		$\implies a = a \vee 0 \geq 0.$
		
		2. For $a \wedge b$, we have:$a \wedge b + a \wedge b = (a \wedge b + a) \wedge (a \wedge b + b) = (a + a) \wedge (a + b) \wedge (b + b)= a \wedge b \wedge (a + b).$ Hence (ii) holds.
		
		3. Since $a \ast b + a \wedge b = a \vee b$, we have:
		$a \ast b + a \wedge b + a \wedge b = a \vee b + a \wedge b \Longrightarrow a \ast b + a \wedge b = a \vee b + a \wedge b.$ Thus, $a \vee b = a + b$.
		
		4. If $(a + b) + (a + b) = (a + a) + (b + b)= a + b,$ then $a + b$ is idempotent. Also, since $a + b = a \vee b$ from (3), (iv) holds.
		
		5. By considering (iii), $a + b = a \vee b + a \wedge b$ is idempotent.
	\end{proof}
	
	\begin{theorem}
		For any $a, b \in A$, $a \ast b = (a \ast (a \wedge b)) \ast (b \ast (a \wedge b))$.
		\begin{proof}
			We have:
			$a \ast b = ((a \ast (a \wedge b)) \ast (b \ast (b \wedge a))) \leq (a \ast (a \wedge b) \ast (b \ast (a \wedge b)))$ and,$(a\ast (a\wedge b))\ast (b\ast (a\wedge b))\leq a\ast b~~ \text{by ~~ (iii)~~ of Definition \ref{2.1}}$. Hence, the proof.
		\end{proof}
	\end{theorem}
	\begin{theorem}
		Let $A = (A, +, \leq, \ast, 0)$ be an AL-monoid. Then $(A, \leq)$ is a distributive lattice.
		\begin{proof}
			Let $a,x,y,z, w$ are elements of $A$.	Clearly, $(A, \leq)$ is a lattice. Let $a \wedge x = a \wedge y$ and $a \vee x = a \vee y$. Then, by axiom 2 of Definition \ref{2.1} and Lemma \ref{L2.1}, we can drive the following equations:
			$(x \ast (a \wedge x)) + (a \wedge x) = x$
			$(y \ast (a \wedge y)) + (a \wedge y) = y.$
			
			Given that \( a \wedge x = a \wedge y \), we can denote this common value as \( z \) (i.e., \( z = a \wedge x = a \wedge y \)).
			Now, we can rewrite the previous equations as:
			$(x \ast z) + z = x	$ and $
			(y \ast z) + z = y.$		
			Next, we express \( x \) and \( y \) in terms of their joins with \( a \):$
			x = ((a \vee x) \ast a) + z$ and $
			y = ((a \vee y) \ast a) + z.$
			Since \( a \vee x = a \vee y \), we can denote this common join as \( w \) (i.e., \( w = a \vee x = a \vee y \)).
			
			We then have: $x = (w \ast a) + z \quad \text{and} \quad y = (w \ast a) + z.$
			Since both expressions for \( x \) and \( y \) are equal, it follows that \( x = y \).
			
			Therefore, we have shown that \( a \wedge x = a \wedge y \) and \( a \vee x = a \vee y \) implies \( x = y \). This demonstrates that the lattice is distributive.
		\end{proof}
	\end{theorem}
	
	\begin{remark}
		Let $A$ have unity, i.e., there exists $1$ such that $a + (a \ast 1) = 1.$ If $a \in A^{+}$, then $a \leq 1$ and $0 \leq a \ast 1 \Rightarrow a = a + 0 \leq a + (a \ast 1) = 1$.
	\end{remark}
	
	With usual terminology, we have the following:
	
	%\begin{definition}
	%	For $a, a' \in A$, $a'$ is a complement of $a$ if $a \wedge a' = 0$ and $a \vee a' = 1$.
	%\end{definition}
	\begin{lemma}
		\begin{itemize}
			\item[(i).] If $a$ is complemented then $a+a=a.$
			\item[(ii).] $a\wedge (a\ast 1)=0$
			\item[(iii).] $a\vee (a\vee 1)=1.$
			\item[(iv).] $a'=a \ast 1$
		\end{itemize}
	\end{lemma}
	\begin{proof}
		(i).	Let $a$ have a complement $a'$ in $A$. Then,
		$a = a + 0 = a + (a \wedge a') = (a + a) \wedge (a + a')= (a + a) \wedge 1= a + a.$\\
		(ii). $(a\ast 1)+a=1=a'+a \implies a\ast 1 \leq a'$ (by (i)) implies $a\wedge (a\ast 1)\leq a\wedge a'=0.$\\
		(iii). $a\ast (a\wedge (a\ast 1))+(a\ast 1)=a\vee (a\ast 1).$ Implies that $a\ast 0+ a\ast 1=a\vee (a\ast 1)\implies a+(a\ast 1)=1.$
	\end{proof}
	\begin{theorem}
		The complemented elements of $A$ form a Boolean algebra.
		\begin{proof}
			Thus, we also have:$(a \ast 1) + a =1 = a' + a \Longrightarrow a \ast 1 \leq a'$ $ \Longrightarrow a \wedge (a \ast 1) \leq a \wedge a' = 0.$
			This leads to:$	a \ast (a \wedge (a \ast 1)) + (a \ast 1)= a \vee (a \ast 1)\Longrightarrow a \ast 0 + a \ast 1 = a \vee (a \ast 1)=a + (a \ast 1) = 1.$
			Thus, $a' = a \ast 1$. Since the set of all complemented elements forms a sublattice of $A$ having $0$ and $1$, it follows that $A$ is a Boolean algebra.
		\end{proof}
	\end{theorem}
	
	\begin{theorem}
		$A$ with unity is a Boolean algebra if for each $a \in A$, the mapping $x \mapsto a \ast x$ is an isometry.
	\end{theorem}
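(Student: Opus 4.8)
The plan is to show that, under the hypothesis, every element $a$ is complemented with complement $a'=a\ast 1$; the conclusion is then immediate from the preceding theorem that the complemented elements of $A$ form a Boolean algebra. So the whole problem reduces to verifying the two identities $a\wedge (a\ast 1)=0$ and $a\vee (a\ast 1)=1$ for an arbitrary $a$, where I may freely use the relations $a+(a\ast 1)=1=1+1$ coming from the unity. The key idea is to split the isometry hypothesis into its two halves, surjectivity and injectivity, and use each for a different identity.

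First I would exploit \emph{surjectivity}: since $x\mapsto 0\ast x$ is onto while $0\ast x\geq 0$ by Lemma \ref{L2:3}, every element of $A$ lies in $A^{+}$, so $A=A^{+}$. By the preceding Remark this makes $A$ a DRl-semigroup with $a-b=a\ast(a\wedge b)$, makes $1$ the greatest element, and gives semiregularity $a\ast 0=a$ (Lemma \ref{L2.2}) for every $a$; in particular $a\ast 1=1-a$. Next, from the isometry identity $(a\ast x)\ast (a\ast y)=x\ast y$ I would set $y=0$, whence $a\ast 0=a$ and $x\ast 0=x$ collapse it to $(a\ast x)\ast a=x$, i.e. the involution property $a\ast(a\ast x)=x$ for all $x$. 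Taking $x=1$ gives $a\ast(a\ast 1)=1$, and applying the same for the element $1$ (with $1\ast a=1-a$) gives $1\ast(1-a)=a$.

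With these in hand the join identity is short. By Theorem \ref{T2:3}(2), $a\vee (a\ast 1)=a\ast(a\ast 1)+a\wedge (a\ast 1)=1+[a\wedge (a\ast 1)]$; since $a\wedge (a\ast 1)\geq 0$, monotonicity of $+$ gives $a\vee (a\ast 1)\geq 1$, while $1$ being the top gives $a\vee (a\ast 1)\leq 1$, forcing $a\vee (a\ast 1)=1$. The meet identity is the crux, and it is here that \emph{injectivity} of $x\mapsto a\ast x$ enters. Writing $c=a\wedge (a\ast 1)$, Lemma \ref{L2.1} gives $a\ast c=\bigl(a\vee (a\ast 1)\bigr)\ast (a\ast 1)=1\ast (a\ast 1)=1\ast(1-a)=a$; hence $a\ast c=a=a\ast 0$, and injectivity of the isometry yields $c=0$, that is $a\wedge (a\ast 1)=0$.

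Therefore $a\ast 1$ is a complement of each $a$, so every element of $A$ is complemented, and by the preceding theorem $A$ is a Boolean algebra. I expect the main obstacle to be exactly the meet identity: the earlier derivation of $a\wedge (a\ast 1)=0$ tacitly presupposed that $a$ was already complemented, so the genuine work is to manufacture the complement from nothing. This is what the bijectivity of $x\mapsto a\ast x$ supplies through the two levers above, surjectivity forcing $A=A^{+}$ together with the involution, and injectivity annihilating $a\wedge (a\ast 1)$.
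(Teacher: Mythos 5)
Your proof is correct and follows the paper's overall strategy: use surjectivity of one of the maps $x\mapsto a\ast x$ together with $a\ast b\geq 0$ to force $A=A^{+}$ with greatest element $1$, then show that $a\ast 1$ is a complement of $a$ and invoke the preceding theorem that the complemented elements form a Boolean algebra. The join identity $a\vee(a\ast 1)=1$ is obtained in both arguments by the same squeeze: $1+\bigl[a\wedge(a\ast 1)\bigr]\geq 1$ against $1$ being the top. Where you genuinely diverge is the meet identity, which is the crux. The paper also tries to show $1\ast\bigl(a\wedge(1\ast a)\bigr)=1\ast 0$ and apply injectivity of $x\mapsto 1\ast x$, but its computation is circular as written: it rewrites $1\ast y$ as a complement $y'$ and manipulates it with De Morgan-style identities, i.e., it presupposes exactly the Boolean structure being established. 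Your route --- isolate the involution $a\ast(a\ast x)=x$ by setting $y=0$ in the isometry identity and using semiregularity (available once $A=A^{+}$), then compute $a\ast\bigl(a\wedge(a\ast 1)\bigr)=\bigl(a\vee(a\ast 1)\bigr)\ast(a\ast 1)=1\ast(1\ast a)=a=a\ast 0$ via Lemma \ref{L2.1} and the already-proved join identity, and finish by injectivity of $x\mapsto a\ast x$ --- is rigorous at every step and in effect repairs the one weak link in the paper's own argument. The only cosmetic difference elsewhere is that you use the map $x\mapsto 0\ast x$ for the surjectivity step where the paper uses $x\mapsto 1\ast x$; both work since $a\ast b\geq 0$ always.
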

	\begin{proof}
		Since the mapping $x \mapsto 1 \ast x$ is an isomorphism, this implies \( a = 1 \ast x \) for some \( x \), hence \( a \geq 0 \). Also, \( a + (1 \ast a) = 1 \Longrightarrow a \leq 1 \). Thus, \( A = A^{+} \) with the greatest element \( 1 \). 
		
		Let \( a \in A \). Then:
		$a \vee (1 \ast a) = a \ast (1 \ast a) + a \wedge (1 \ast a)  = (a \ast 0) \ast (a \ast 1) + a \wedge (a \ast 1) = 0 \ast 1 + a \wedge (a \ast 1) = 1 + a \wedge (a \ast 1) = 1.$
		
		Now consider:$1 \ast (a \wedge (1 \ast a)) = 1 \wedge (a \wedge (1 \wedge a'))'=(1 \wedge a') \vee (1 \wedge (1 \wedge a')) = (1 \ast a) \vee (1 \ast (1 \ast a)) =1 = 1 \ast 0,$
		which implies \( a \wedge (1 \ast a) = 0 \). Thus, \( a \) is complemented. Hence, \( A \) is a Boolean algebra.
	\end{proof}
	
	\begin{theorem}
		If \( (A, +, \leq, 0) \) is a commutative monoid that is a chain such that:
		\begin{itemize}
			\item[1.] \( x \leq y \Rightarrow a + x \leq a + y \)
			\item[2.] \( a \ast (a \wedge b) + b = a \vee b \)
			\item[3.] The mappings \( x \mapsto a + x, a \vee x, a \wedge x, a \ast x \) are contractions with respect to \( \ast \)
		\end{itemize}
		then \( (A, +, \leq, 0, \ast) \) is an AL-monoid.
		\begin{proof}
			We will show that \( [a \ast (a \vee b)] \wedge [b \ast (a \wedge b)] = 0 \). Since \( A \) is a chain, we have either \( a = a \vee b \) or \( b = a \vee b \). 
			
			If \( b = a \vee b \), then:$
			(a \ast (a \vee b)) \wedge (b \ast (a \vee b)) = (a \ast a) \wedge (b \ast a) = 0 \wedge (b \ast a) = 0.
			$
			
			If \( a = a \vee b \), then:
			$(a \ast (a \vee b)) \wedge (b \ast (a \vee b)) = (a \ast b) \wedge 0 = 0.$
			
			Thus, \( A \) is an AL-monoid.
		\end{proof}
	\end{theorem}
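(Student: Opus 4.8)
The plan is to check the four defining conditions of an AL-monoid (Definition \ref{2.1}). Condition (2) of the theorem is verbatim axiom (2), and condition (3) is verbatim axiom (3), so these require nothing. Axiom (1)---that $(A,+,\vee,\wedge,0)$ be a commutative lattice ordered monoid---will come from the chain hypothesis together with the monotonicity condition (1) of the theorem. The entire content of the proof therefore lies in deriving axiom (4), $[a\ast(a\vee b)]\wedge[b\ast(a\vee b)]=0$, and the totality of $\leq$ is exactly what makes this possible.

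First I would record the lattice-ordered monoid structure. Since $(A,\leq)$ is a chain, every pair has $a\wedge b=\min\{a,b\}$ and $a\vee b=\max\{a,b\}$, so $(A,\vee,\wedge)$ is a lattice. Monotonicity of translation (condition (1) of the theorem) yields the compatibility laws $a+(b\wedge c)=(a+b)\wedge(a+c)$ and $a+(b\vee c)=(a+b)\vee(a+c)$: in a chain one checks these on the two comparabilities of $b,c$ using $x\leq y\Rightarrow a+x\leq a+y$. With commutativity and identity $0$ this gives axiom (1) of Definition \ref{2.1}.

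For axiom (4) I would argue by the totality of $\leq$. Given $a,b$, either $a\vee b=a$ or $a\vee b=b$. If $a\vee b=a$ (i.e.\ $b\leq a$) then
\[
[a\ast(a\vee b)]\wedge[b\ast(a\vee b)]=(a\ast a)\wedge(b\ast a)=0\wedge(b\ast a),
\]
and if $a\vee b=b$ the expression collapses to $(a\ast b)\wedge 0$. In either case one factor is a diagonal distance $x\ast x$, and I claim $x\ast x=0$ while the other factor satisfies $x\ast y\ge 0$; then $0\wedge(x\ast y)=0$ because $\leq$ is total, closing axiom (4). The nonnegativity is Lemma \ref{L2:3} and the diagonal vanishing is Lemma \ref{L2:4}; alternatively, semiregularity (Lemma \ref{L2.2}, giving $0\ast 0=0$) together with the translation contraction applied at $u=v=0$ gives $a\ast a=(a+0)\ast(a+0)\leq 0\ast 0=0$ directly from conditions (2) and (3).

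The main obstacle is making the two positivity facts genuinely available here. A careful reader will notice that $a\ast b\ge 0$ (Lemma \ref{L2:3}) and $a\ast a=0$ (Lemma \ref{L2:4}) are theorems \emph{about} AL-monoids, so invoking them while proving the last AL-monoid axiom risks circularity. I would dispel this by isolating what is actually used. The inequality $a\ast a\leq 0$ needs only conditions (2),(3), by the contraction trick above. The off-diagonal positivity $a\ast b\ge 0$ for $a\neq b$ follows in a chain from condition (2) alone: if $b<a$ then axiom (2) gives $a\ast b+b=a$, and $a\ast b<0$ would force $a\ast b+b\leq b<a$ by monotonicity, a contradiction. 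The one remaining delicate point is upgrading $a\ast a\leq 0$ to $a\ast a=0$; this is immediate when $+$ is cancellative (from $a\ast a+a=a$) and, in general, is precisely where one uses that $\ast$ is the intrinsic metric of the ambient autometrized setting. Once $a\ast a=0$ and $a\ast b\ge 0$ are secured, the case analysis above closes axiom (4) and $A$ is an AL-monoid.
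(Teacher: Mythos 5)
Your proof of axiom (4) is exactly the paper's argument: split on $a\vee b=a$ or $a\vee b=b$ using totality of $\leq$, so that one factor is a diagonal distance and the expression collapses to $0\wedge(x\ast y)=0$. You are in fact more careful than the paper, which silently invokes $a\ast a=0$ and $b\ast a\geq 0$ without noting that these are proved later only for AL-monoids; your circularity worry is legitimate and your derivations of $a\ast a\leq 0$ (via the $+$-contraction at $0$ and $0\ast 0=0$) and of $a\ast b\geq 0$ for $a\neq b$ (via condition (2) and monotonicity) are sound. The one point you leave open --- upgrading $a\ast a\leq 0$ to $a\ast a=0$ without cancellativity --- can be closed from the stated hypotheses alone, by the same device the paper uses in Lemma \ref{L2:3}: put $d=a\wedge 0$, so condition (2) with the pair $(0,d)$ gives $0\ast d+d=0\vee d=0$; writing $e=0\ast d$, the contraction of $x\mapsto e+x$ yields $0=0\ast 0=(e+d)\ast(e+d)\leq d\ast d$, and the contraction of $x\mapsto x\wedge d$ yields $d\ast d=(a\wedge d)\ast(a\wedge d)\leq a\ast a$; hence $a\ast a\geq 0$ and so $a\ast a=0$. (This argument in fact gives $x\ast y\geq 0$ for all $x,y$, making your separate off-diagonal case unnecessary, and it uses only conditions (2) and (3), not axiom (4), so there is no circularity.) Your additional verification of axiom (1) of Definition \ref{2.1} from the chain structure and monotonicity is a worthwhile supplement that the paper omits entirely.
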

	
\end{document}